\numberwithin{equation}{section}
\theoremstyle{plain}
\newtheorem{theorem}[subsection]{Theorem}
\newtheorem{definition}[subsection]{Definition}
\newtheorem{proposition}[subsection]{Proposition}
\newtheorem{lemma}[subsection]{Lemma}
\newtheorem{remark}[subsection]{Remark}
\newtheorem{question}[subsection]{Question}
\newtheorem{corollary}[subsection]{Corollary}
\theoremstyle{definition}
\newcommand{\mC}{{\mathbb C}}
\newcommand{\mE}{{\mathbb E}}
\newcommand{\mF}{\mathbb F}
\newcommand{\mN}{\mathbb N}
\newcommand{\mT}{\mathbb T}
\newcommand{\mZ}{{\mathbb Z}}
\newcommand{\Gg}{\gamma}
\newcommand{\GG}{\Gamma}
\newcommand{\ep}{\epsilon}
\newcommand{\mcA}{\mathcal A}
\newcommand{\mcB}{\mathcal B}
\newcommand{\mcF}{\mathcal F}
\newcommand{\ti}{\tilde}
 \newcommand\End{\operatorname{End}}
  \newcommand\Ker{\operatorname{Ker}}
   \newcommand\Hom{\operatorname{Hom}}
\begin{document}
\title{Approximate cohomology}
\begin{abstract}
Let $k$ be a field,  $G$ be an abelian group and $r\in \mN.$
 Let $L$ be an infinite dimensional $k$-vector space. For any $m\in \End_k(L)$ we denote by $r(m)\in [0,\infty ]$ the rank of $m$. We define by $R(G,r,k)\in [0,\infty ]$ the minimal $R$ such that for any map  $A:G \to  \End_k(L)$ with $r(A(g'+g'')-A(g')-A(g''))\leq r$, $g',g''\in G$ there exists a homomorphism $\chi :G\to \End_k(L)$ such that $r(A(g)-\chi (g))\leq R(G, r, k)$ for all $g\in G$. 

We show the finiteness of $R(G,r,k)$ for the case when $k$ is a finite field, $G=V$ is a $k$-vector space $V$ of countable dimension. We actually prove a generalization of this result.

 In addition 
we introduce a notion of {\it Approximate Cohomology}  groups $H^k_\mcF (V,M)$  (which is a purely algebraic analogue of the notion of  
$\ep$-representation (\cite{ep})) and interperate our result as a computation of 
the group $H^1_\mcF (V,M)$ for some $V$-modules $M$. 
\end{abstract}

\author{David Kazhdan}
\address{Einstein Institute of Mathematics,
Edmond J. Safra Campus, Givaat Ram 
The Hebrew University of Jerusalem,
Jerusalem, 91904, Israel}
\email{david.kazhdan@mail.huji.ac.il}

\author{Tamar Ziegler}
\address{Einstein Institute of Mathematics,
Edmond J. Safra Campus, Givaat Ram 
The Hebrew University of Jerusalem,
Jerusalem, 91904, Israel}
\email{tamarz@math.huji.ac.il}

\thanks{The second author is supported by ERC grant ErgComNum 682150}

\maketitle

\section{Introduction}

Let $k$ be a field,  $G$ be an abelian group and $r\in \mN.$
 Let $L$ be an infinite dimensional $k$-vector space, $End(L)=End_k(L)$ and $M\subset \End(L)$ be 
the subspace of operators of finite rank. For any $m\in \End(L)$ we denote by $r(m)\in [0,\infty ]$ the rank of $m$. We define by $R(G,r,k)\in [0,\infty ]$(correspondingly $R^f(G,r,k))$ the minimal number $R$ such that for any map  $A:G\to \End(L)$(correspondingly a map $A:G\to M)$, $g\to m_g$ with 
$r(A(g'+g'')-A(g')-A(g''))\leq r$, $g',g''\in G$ there exists a homomorphism $\chi :G\to \End(L)$ such that $r(A(g)-\chi (g))\leq R(G,r,k)$ for all $g\in G$.

It is easy to see that in the case when $G=\mZ$ and $\operatorname{char} (k)\neq 2$ we have $R(\mZ,1, \mF)\leq 2$. We sketch the proof:  one studies the rank 
$\le 1$ operators $r_{m,n}=A(m+n)-A(m)-A(n)$. Since $r_{0,0}$ of rank $\le 1$, we replace $r_{m,n}$ by $r_{n,m}-r_{0,0}$ and can then assume that $r_{0,0}=0$. Under this condition one must show that $r_{n,m}$ is a coboundary of rank one operators.   The operators $r_{n,m}$ satisfy the equation $r_{a,-a}=r_{a+c, -a}+r_{a,c}$. From this one can deduce that either there is a subspace of codimension $1$ in the kernel of all three operators, or a subspace of  dimension $1$ containing the image of all three. One shows inductively that this property holds for all operators $r_{m,n}$. Unfortunately we don't know whether
$R(\mZ ,2,\mC)<\infty$.

In this paper we first show that $R^f(V,r,k)<\infty$ in the case when $k=\mF_p$ and 
$G$ is a $k$- vector space $V$ of countable dimension and then 
show that  $R(V,r,k)=R^f(V,r,k)$. 

Actually we prove the analogous bound in a more general case when $M$ is replaced by
 the space of tensors $L^1\otimes L^2\otimes ...\otimes L^n$. To simplify the exposition
 we assume that $L_i=L^\vee$ and that $Im(A)$ is contained in the subset $Sym^d(L^\vee)\subset {L^\vee}^{\otimes n}$ of
 symmetric tensors. In other words we consider map $A:V\to M^d$ where $M^d$ is isomorphic to the space of homogeneous polynomials of degree
 $d$ on $L$. We denote by $N^d\subset M^d$ the subspace of multilinear polynomials.

Now some formal definitions.

\begin{definition}[Filtration]
Let $M$ be an abelian group.  A {\em filtration} $\mcF$ on $M$ is an increasing 
sequence of subsets $M_i, 1\leq i\leq \infty$ of $M$ such that for each $i,j$ there exists $c(i,j)$ such that $M_i+M_j\subset M_{c(i,j)}$ and $M=\cup _iM_i$.
Two filtrations $M_i,M_i'$ on $M$ are equivalent if there exist
functions $a,b: \mathbb Z _+\to  \mathbb Z _+$ such that $M_i\subset M_{a(i)}'$ and $M'_i\subset M_{b(i)}$.
\end{definition}

\begin{definition} [Algebraic rank filtration] Let $k$ be a field. Fix $d\geq 2$  and consider the
$k$-vector space $ M=M^d$  of homogeneous polynomials $P$ of
degree $d$ in variables $x_j$, $j\geq 1$.
For a non-zero homogeneous  polynomial  $P$ on a $k$-vector space $W$, $P\in k[W^\vee ]$  of degree $d\geq 2$ we
define the {\it rank} $r(P)$ of $P$ as $r$ ,where $r$ is the minimal number $r$ such that it is possible to write $P$ in the form
$$P = \sum ^ r_{i=1} l_iR_i,$$ 
where $l_i, R_i\in \bar k[W^\vee ]$ are homogeneous polynomials of positive degrees (in \cite{schmidt} this is called the $h$-invariant).
We denote by $\mcA _d$ the  filtration on $M$ such that  $M_n$ is the subset of polynomials $P$ with $r(P) <  n$.
For the $k$-space $\mathcal P^d$ of non-homogeneous polynomials  of degree $\le d$ define the rank similarly, and denote by $\mcB_d$ the corresponding filtration. \\
\end{definition}

\begin{definition}[Finite rank homomorphisms] 
Let $V$ be a countable vector space over $k$. We say that a linear map $P:V\to M^d$ is of {\em finite rank} if we can write $P$ as a sum $\sum _{k=1}^{d-1}P_k$ where each $P_k$ is a finite sum $P_k=\sum _jQ_jR_j$ where $R_j\in M^k$ and $Q_j$ is a linear map from $V$ to $M^{d-k}$.  Denote  $\Hom _f(V,M^d)$ the subspace of $\Hom (V,M^d) $ of finite rank maps. 
\end{definition}

Now we can formulate our main result. 
\begin{theorem}\label{Main} For any finite field $k=\mF _p ,d<p-1, r\geq 0$ there exists $R=R(r,k,d)$ such that for any map  
$$A:V\to M=M^d, \quad r(A(v'+v'')-A(v')-A(v''))\leq r, \quad v',v''\in V$$ there exists a homomorphism 
$\chi_A : V\to M$ such that $r(A(v)-\chi_A (v))\leq R$. Moreover the homomorphism $\chi _A$ is unique up to an addition of a homomorphism of a finite rank.
\end{theorem}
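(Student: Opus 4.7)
The plan is to view the problem as one of approximate cohomology: the obstruction to $A$ being a homomorphism is the symmetric 2-cocycle
\[
c(v',v''):=A(v'+v'')-A(v')-A(v''),
\]
whose values have rank $\le r$ by hypothesis. Normalizing $A(0)=0$ costs only a bounded-rank element of $M^d$, since $c(0,0)=-A(0)$ already has rank $\le r$. After this normalization $c$ is an $M^d$-valued symmetric 2-cocycle on $V$ satisfying
\[
c(u,v+w)+c(v,w)=c(u+v,w)+c(u,v),
\]
with all values of rank at most $r$. The aim is to construct a 1-cochain $f:V\to M^d$ with $r(f(v))\le R'$ for some $R'=R'(r,k,d)$ independent of $v$ such that $c$ equals the coboundary $\delta f(v',v'')=f(v'+v'')-f(v')-f(v'')$ up to strictly smaller-rank corrections; then $\chi_A:=A-f$ is the desired homomorphism and $R=R(r,k,d)$ is read off.

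I would proceed by induction on the degree $d$, the cases $d\le 1$ being immediate. For the induction step, the strategy is to pass to the discrete derivatives $\Delta_h A(v):=A(v+h)-A(v)$. A short computation using the cocycle identity shows that $\Delta_h A$ is itself an approximate homomorphism whose values are governed by data of ``effective degree in $h$'' one less than those of $A$; iterating reduces the problem to the linear (trivial) case. The hypothesis $d<p-1$ enters decisively here: it ensures that the multinomial coefficients arising in successive polarizations do not vanish modulo $p$, so that a map $V\to M^d$ is determined, up to bounded-rank error, by its iterated derivatives of order $\le d$. Assembling these derivatives back by polarization yields the candidate $\chi_A$.

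The technical core of the proof is the rank-control step, which I expect to be the main obstacle. For each pair $(v,w)$ the decomposition $c(v,w)=\sum_{i=1}^r l_{v,w,i}R_{v,w,i}$ is given with a priori uncontrolled dependence on $v,w$, so summing such decompositions over many pairs in a naive telescoping argument produces a rank bound that grows with the number of summands. The required tool is an algebraic regularity / inverse theorem for the $h$-invariant (in the spirit of \cite{schmidt}) saying that a family of bounded-rank polynomials constrained by an algebraic identity (here, the cocycle identity) admits, after a $GL(L)$-equivariant change of coordinates, a decomposition whose factors $l_i,R_i$ depend coherently on $v,w$; the cocycle identity then collapses the telescoping sum into a genuinely bounded-rank 1-cochain $f$. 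For the uniqueness statement, if $\chi_1,\chi_2$ both satisfy the conclusion then $\psi:=\chi_1-\chi_2$ is a homomorphism $V\to M^d$ with $r(\psi(v))\le 2R$ for every $v$, and the same rank-decomposition technology forces the image of $\psi$ to admit a finite expression of the shape $\sum_j Q_j R_j$ required by the definition of finite rank homomorphism above.
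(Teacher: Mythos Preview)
Your framework is correct --- this is exactly an approximate-cohomology problem, and normalizing $A(0)=0$ at bounded cost is fine. But the heart of your proposal has a genuine gap.

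First, the induction-on-$d$ scheme via $\Delta_h A(v)=A(v+h)-A(v)$ does not reduce the degree: $\Delta_h A$ still takes values in $M^d$, the same space of degree-$d$ polynomials in $x$. Nothing in the hypothesis lets you trade $x$-degree for $v$-regularity by differencing in the $v$-variable alone, so the phrase ``effective degree in $h$ one less'' is not justified.

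Second, and more seriously, the ``rank-control step'' you flag as the main obstacle is not addressed by the tool you invoke. A Schmidt-style regularity lemma for the $h$-invariant lets you pass from a single bounded-rank polynomial to a structured decomposition, but it does not give you the statement you need: that a \emph{family} $\{c(v',v'')\}$ of bounded-rank polynomials satisfying only the cocycle identity admits a coherent decomposition that collapses to a bounded-rank $1$-cochain. No such purely algebraic statement is known, and in fact the paper makes essential use of an analytic input that your proposal omits entirely: the inverse theorem for the Gowers $U^{d+2}$-norm over $\mF_p^n$. The actual argument passes to the multilinear form $\tilde P(v)(\bar x)$, shows that $\psi(\tilde P(v)(\bar x))$ has large $U^{d+2}$-norm on $V\times W^d$ (this is where the rank hypothesis is used, via a bias/rank lemma), applies the inverse theorem to produce a genuine polynomial $Q$ of degree $d+1$ correlating with it, and then uses repeated Cauchy--Schwarz and a Bogolyubov-type density argument to upgrade correlation on a positive-density set to the desired linear $\chi_A$. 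The condition $d<p-1$ enters precisely here: in low characteristic the inverse theorem only produces \emph{non-classical} polynomials, and the argument breaks (cf.\ the $p=d=2$ counterexample).

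For the uniqueness clause, your sketch is morally right but again relies on the same unspecified ``rank-decomposition technology''. The paper's argument for this step also goes through bias: if $\psi:V\to M^d$ is linear with $r(\psi(v))\le 2R$ for all $v$, then the associated multilinear form on $V\times W^d$ has large bias, hence bounded rank as a polynomial in $(v,\bar x)$ jointly, from which the finite-rank structure is read off.

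In short: you have correctly identified where the difficulty lies, but the proposed machinery does not resolve it; the missing ingredient is the Gowers inverse theorem, and the proof is analytic rather than purely algebraic-inductive.
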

\begin{question} $a)$  Is there a bound on $R$ independent of $k$ ? Moreover, does there exist $c(d)$ such that $R(V,r,k,d)\leq c(d)r$?

$b)$ Could we drop the condition $d<p-1$ if $Im(A)\subset N^d$?
\end{question}

\begin{remark} Theorem \ref{Main} does not hold for $p = d= 2$, see \cite{tao-example} for a function from $\mF_2^n$ to the space of quadratic forms over $\mF_2$ such that $f(u+v)-f(u)-f(v)$ is of rank $\le 3$ but for $n$ sufficiently large $f$ does not differ from a linear function by a function taking values in bounded rank quadratics.  In the low characteristic case the same proof shows that obstructions come from  {\em non-classical polynomials} see Remark \ref{nonclassical}.  In the case when $G$ is a finite cyclic group, $k$ any field, $d=2$, one can show that  $C(1,k)\le 2$. 
\end{remark}

We can reformulate Theorem \ref{Main} as an example of a computation of {\it approximate cohomology } groups.

\begin{definition}[Approximate cohomology]
\begin{enumerate}
\item 
Let $M$ be an abelian group, and let  $\mcF=\{M_i\}$ be a filtration on $M$.
\item   Let $G$ be a discrete group acting on  $M$ preserving the subsets $M_i$.
A cochain $r:G^n\to M$ is an {\em approximate $n$-cocycle} if  $Im
(\partial r)\subset M_i$ for some $i\in \mZ _+$.  It is clear that the set $Z^n_\mcF$ of approximate $n$-cocycles is a subgroup of the group $C^n$ of $n$-chains which
 depends only on the equivalence class of a filtration $\mcF$.
\item  A cochain $r:G^n\to M$ is an {\em approximate $n$-coboundary} if there exists an $n-1$-cochain $t\in C^{n-1}$ such that 
$Im (r-\partial t)\subset M_i$ for some $ i\in \mZ _+$. 
 It is clear that the set $B^n_\mcF$ of approximate $n$-coboundaries is a subgroup of $ \tilde Z ^n_\mcF$.
\item We define $H^n_\mcF=Z^n_\mcF /B^n_\mcF$.
\item Since any cocycle is an approximate cocycle and any coboundary is an approximate coboundary we have a morphism 
$a^n_\mcF :H^n(G,M)\to H^n_\mcF (G,M)$. 
\end{enumerate}
\end{definition}
In this paper we consider the case when the group $V$ acts trivially on $M$. So the group $Z^1(V,M)$ of $1$-cocycles coincides with the group $\Hom(V,M)$ of linear maps, the subgroup of coboundaries $B^1(V,M)\subset Z^1(V,M)$ is equal to $\{ 0\}$
and therefore $H^1(V,M)=\Hom(V,M)$. In this case we can reformulate the Theorem \ref{Main} in terms of  a computation of the map $a^1_\mcF$.
\begin{corollary}\label{trivial-action-V} Let $k$ be a  prime finite field of characteristic $p$, $V$ be a countable vector space over $k$ acting trivially on $(M, \mcF)=(M^d,\mcA _d)$
and assume that $p>d+1$.  Then the map $a^1: H^1(V,M)=\Hom(V,M)\to \tilde H_\mcF ^1(V,M)$ is surjective, and  $\Ker(a^1) = \Hom _f(V,M_d)$. 
\end{corollary}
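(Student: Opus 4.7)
The plan is to unwind the definition of approximate cohomology in the case of a trivial action on $(M^d,\mcA_d)$ and read off the statement directly from Theorem \ref{Main}. With $V$ acting trivially on $M=M^d$, the $0$-coboundary $\partial t(v)=v\cdot t-t$ vanishes identically on $C^0=M$, so $B^1(V,M)=0$ and $H^1(V,M)=\Hom(V,M)$. A cochain $A:V\to M$ lies in $Z^1_\mcF(V,M)$ iff the map $(v',v'')\mapsto A(v'+v'')-A(v')-A(v'')$ has image in some $M_i$, i.e.\ iff the ranks $r(A(v'+v'')-A(v')-A(v''))$ are uniformly bounded --- precisely the hypothesis of Theorem \ref{Main}. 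Similarly $B^1_\mcF(V,M)$ is the set of maps $A:V\to M$ for which $r(A(v))$ is uniformly bounded.

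For surjectivity of $a^1$, let $A\in Z^1_\mcF(V,M)$ represent a class in $H^1_\mcF(V,M)$. Theorem \ref{Main} yields a homomorphism $\chi_A\in\Hom(V,M)$ with $r(A(v)-\chi_A(v))\le R$ for all $v\in V$; thus $A-\chi_A\in B^1_\mcF$ and $[A]=a^1(\chi_A)$ in $H^1_\mcF$, proving surjectivity.

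For the identification of the kernel, observe that $\chi\in\Hom(V,M)$ lies in $\Ker(a^1)$ iff $\chi\in B^1_\mcF$, i.e.\ iff there exists $N$ with $r(\chi(v))\le N$ for every $v$. The containment $\Hom_f(V,M^d)\subset\Ker(a^1)$ is immediate from the definition of finite rank: if $\chi(v)=\sum_{k,j}Q_{k,j}(v)R_{k,j}$ with the $R_{k,j}\in M^k$ fixed and $Q_{k,j}:V\to M^{d-k}$ linear, then $r(\chi(v))$ is bounded by the (finite) total number of summands, uniformly in $v$. For the reverse containment I invoke the uniqueness clause of Theorem \ref{Main}: given $\chi\in\Hom(V,M)$ with $r(\chi(v))\le N$ for all $v$, set $A:=\chi$ (which is a genuine cocycle, hence \emph{a fortiori} an approximate one). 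Then both $\chi_A:=\chi$ (yielding difference $0$) and $\chi_A:=0$ (yielding difference $\chi$ of rank $\le N$) are homomorphisms satisfying the defining inequality of Theorem \ref{Main}, so by uniqueness their difference $\chi-0=\chi$ is a homomorphism of finite rank, that is, $\chi\in\Hom_f(V,M^d)$.

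Because the corollary is a direct translation of Theorem \ref{Main}, there is no serious obstacle in this deduction; the one place care is needed is in matching the two notions \emph{``image of uniformly bounded rank''} and \emph{``element of $\Hom_f$''}, and this matching is supplied precisely by the uniqueness clause of Theorem \ref{Main} rather than by any separate argument.
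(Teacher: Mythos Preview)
Your deduction is correct and matches the paper's intent: the corollary is presented there as a direct reformulation of Theorem \ref{Main}, with surjectivity coming from existence and the kernel computation coming from the uniqueness clause, exactly as you argue. The one structural difference worth noting is that in the paper the uniqueness clause of Theorem \ref{Main} is not an independent input but is itself established via Proposition \ref{kernel}, which proves directly (using exponential-sum bounds and the rank--bias equivalence) that a linear map $P:V\to M^d$ with $r(P(v))$ uniformly bounded must lie in $\Hom_f(V,M^d)$; so your invocation of uniqueness is logically equivalent to citing that proposition.
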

\begin{question} How to describe $H_\mcF ^n(V,M)$ for $n>1$ ?
\end{question}
\begin{corollary}\label{translation-action-V} Let $k$ be a prime  finite field of characteristic $p$, and let $V$ be a countable vector space over $k$.  
Consider the filtration $(\mathcal P^d, \mcB_d)$ with $W=V$ and with $V$-acting by translations $(v .P)(x) = P(x+v)$ and assume that $p>d+1$.   Then the map $a^1_\mcB$ is surjective.
\end{corollary}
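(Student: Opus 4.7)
The plan is to induct on $d$, with the base cases $d\le 1$ vacuous as every polynomial of degree at most one has bounded rank in the filtration $\mcB_d$. For the inductive step, let $r\colon V\to \mathcal P^d$ be an approximate $1$-cocycle for the translation action; we produce an honest cocycle $\psi\colon V\to \mathcal P^d$ and a $0$-cochain $t\in \mathcal P^d$ with $r-\psi-\partial t$ of bounded rank.

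Passing to top degree via $\mathcal P^d\twoheadrightarrow \mathcal P^d/\mathcal P^{d-1}=M^d$ (on which $V$ acts trivially) yields an approximate homomorphism $r_d\colon V\to M^d$. Corollary \ref{trivial-action-V} supplies a homomorphism $\chi_d\colon V\to M^d$ with $r_d(v)-\chi_d(v)$ of bounded rank for every $v$. Since $d<p$, polarisation gives a multilinear form $F\colon V^{d+1}\to k$, symmetric in its last $d$ arguments, with $\chi_d(v)(x)=F(v,x,\dots,x)$.

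The heart of the argument is to realise (a suitable representative of) $\chi_d$ as the top-degree part of the cocycle $\psi_\phi(v)(x):=\phi(x+v)-\phi(x)$ for a carefully chosen $\phi\in\mathcal P^{d+1}$. A direct expansion shows that if $\phi(x)=T(x,\dots,x)$ for a symmetric multilinear $T$ on $V^{d+1}$, then the top-$x$-degree-$d$ part of $\psi_\phi(v)$ is $(d+1)\,T(v,x,\dots,x)$; since $p>d+1$ we may set $T:=F/(d+1)$ to hit $\chi_d(v)(x)$ exactly, \emph{provided} $F$ is fully symmetric. The crucial lemma is that the cocycle condition on $r$ forces $F$ to be symmetric modulo a finite-rank homomorphism. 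Indeed, comparing the degree-$(d-1)$-in-$x$ parts of $\partial r(v_1,v_2)$ and $\partial r(v_2,v_1)$ and subtracting, one obtains, after substituting $r_d=\chi_d+(\mathrm{bounded~rank})$ and using that the discrete derivative $D_v$ preserves boundedness of rank, that $d\bigl(F(v_1,v_2,x,\dots,x)-F(v_2,v_1,x,\dots,x)\bigr)$ is of bounded rank in $x$. Combined with the pre-existing symmetry in the last $d$ slots this propagates to all of $S_{d+1}$, and so the fully symmetrised form $F^{\mathrm{sym}}$ yields a homomorphism $\chi_d^{\mathrm s}$ with $r_d-\chi_d^{\mathrm s}$ still of bounded rank. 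By the uniqueness clause of Theorem \ref{Main}, $\chi_d-\chi_d^{\mathrm s}$ is then a finite-rank homomorphism, so we may replace $\chi_d$ by $\chi_d^{\mathrm s}$; setting $T:=F^{\mathrm{sym}}/(d+1)$ and $\phi(x):=T(x,\dots,x)\in M^{d+1}$ gives the cocycle $\psi_\phi$ whose top-degree part equals $\chi_d$ modulo bounded rank.

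Finally, $r-\psi_\phi$ is an approximate cocycle whose image in $M^d$ is of bounded rank; subtracting a bounded-rank correction yields an approximate cocycle with values in $\mathcal P^{d-1}$, using that the coboundary of a bounded-rank cochain is again of bounded rank. The inductive hypothesis then provides a cocycle $\psi'$ and $t'\in\mathcal P^{d-1}$ with $r-\psi_\phi-\psi'-\partial t'$ of bounded rank, whence $\psi:=\psi_\phi+\psi'$ is the required cocycle. The principal obstacle is the symmetrisation lemma: upgrading the pointwise bounded-rank antisymmetry of $F$ into the statement that $F^{\mathrm{sym}}-F$ corresponds to an \emph{algebraic} finite-rank homomorphism (by way of the uniqueness in Theorem \ref{Main}), together with tracking how bounded rank and discrete derivatives interact; all uses of $p>d+1$ enter through polarisation and the inversion of $d+1$.
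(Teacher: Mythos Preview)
Your argument is considerably more detailed than the paper's. The paper records only the first reduction: taking the top homogeneous part $Q(v)$ of an approximate translation-cocycle $P$ and observing that, since $P(v)(x+v')-P(v)(x)$ has degree $<d$, the map $Q$ is an approximate homomorphism for the \emph{trivial} action on $M^d$, so Corollary~\ref{trivial-action-V} (equivalently Theorem~\ref{Main}) produces a linear $\chi_d$ with $Q-\chi_d$ of bounded rank. The paper then stops, leaving the induction on $d$ and, crucially, the lifting of $\chi_d$ to an honest translation-cocycle entirely to the reader. Your proposal supplies exactly this missing step: you correctly identify that one must realise $\chi_d$ (up to bounded rank) as the degree-$d$ part of $\psi_\phi(v)=\phi(\cdot+v)-\phi(\cdot)$ for some $\phi\in M^{d+1}$, and that this forces the polarisation $F$ of $\chi_d$ to be fully symmetric, which is not automatic. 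Your symmetrisation lemma is therefore the genuine content beyond what the paper writes down.

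There is one point in the symmetrisation that needs more than you have written. From the degree-$(d-1)$ comparison you correctly obtain that $G(v_1,v_2)(x):=F(v_1,v_2,x,\dots,x)-F(v_2,v_1,x,\dots,x)$ has bounded rank in $x$ for each fixed pair $(v_1,v_2)$. But to conclude that $\chi_d-\chi_d^{\mathrm s}$ has pointwise bounded rank you need $F(v,x,\dots,x)-F(x,v,x,\dots,x)$ to have bounded rank in $x$, i.e.\ you must specialise $v_2=x$. A pointwise rank bound does not in general survive such a substitution. The remedy is to feed the argument of Proposition~\ref{kernel} back in: for fixed $v$, the linear map $v_2\mapsto G(v,v_2)$ into $M^{d-1}$ has pointwise rank $\le L$, so the bias computation there shows that the associated multilinear form in $(v_2,x_2,\dots,x_d)$ has rank $\le K(L)$ \emph{globally}; restricting to the diagonal $v_2=x_2=\cdots=x_d=x$ then exhibits $G(v,x)(x)$ as a sum of at most $K(L)$ products of positive-degree homogeneous forms in $x$, uniformly in $v$. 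With this in hand your use of the uniqueness clause in Theorem~\ref{Main} and the descent to $\mathcal P^{d-1}$ go through as written.
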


\begin{proof} Let $P: V \to  \mathcal P^d$ where $\mathcal P^d$  is the space of polynomial of degree $\le d$. . 
We assume
\[
\partial P(v,v')(x)= P(v+v')(x)-P(v)(x+v')-P(v')(x)
\]
is of rank $\le i$ for any $v,v' \in V$. Let $Q(v)$ be the homogeneous degree $d$ term of $P(v)$.  Then since 
$P(v)(x+v')-P(v)(x)$ is of degree $<d$  we have 
\[
Q(v+v')(x)-Q(v)(x)-Q(v')(x)
\]
is  of rank $\le i+1$. 

\end{proof}

\begin{remark}  The proof of  Theorem \ref{Main} uses the inverse theorem for the Gowers norms \cite{btz,tz-inverse}. One can use also prove the reverse implication modifying the arguments in \cite{sam}, and
  thus an independent proof of Theorem \ref{Main} could  lead to a new proof of the inverse conjecture for the Gowers norms.
\end{remark}

\section{Proof of Theorem \ref{Main}}
 For a function $f$ on a finite set $X$ we define
\[
\mE_{x \in X}f(x) = \frac{1}{|X|}\sum_{x \in X}f(x).
\]
We use $X \ll_L Y$ to denote the estimate 
$|X| \le C(L) |Y|$, where the constant $C$ depends only on $L$.  We fix a prime  finite  field $k$ of order $p$ and degree $d<p-1$ and suppress the dependence of all bounds on $k,d$. We also fix a non-trivial additive character $\psi$  on $k$.      \\  
\ \\
 For a function $f:G \to H$ a function between abelian groups we denote
$\Delta_h f(x) = f(x+h) - f(x)$. if $f:G_1 \times G_2 \to H$ then for $g_1 \in G_1$ we write $\Delta_{g_1} f$ shorthand for $\Delta_{(g_1,0)}f$. \\
\ \\
Let $V$ be a finite vector space over $k$.  Let $F: V \to k$. The $m$-th {\em Gowers norm}  of $F$  is defined by
\[
\|\psi(F)\|_{U_m}^{2^m} = \mE_{v, v_1, \ldots, v_m \in V} \psi(\Delta_{v_m} \ldots  \Delta_{v_1} F(v)).
\]
These were introduced by Gowers in \cite{gowers}, and were shown to be norms for $m>1$.\\
\ \\
For a homogeneous polynomial $P$ on $V$ of degree $d$ we define
\begin{equation}\label{multilinear}
\tilde P(x_1, \ldots, x_d) = \Delta_{x_d}\ldots \Delta_{x_1}P(x).
\end{equation}
This is a multilinear homogeneous form in $x_1, \ldots, x_d \in V$ such  that 
 $$P(x) = \frac{1}{d!}\tilde P(x,\ldots, x).$$

\begin{proposition}\label{main-finite}
There exists a function $R(L)$ such that for any finite dimensional $k$-vector space $V$ and a map $P : V \to M= M_d$ such that 
$r(P(v+v') -P(v) -P(v')) \le  L$ for $v,v' \in V$ there exists a linear map $Q : V \to M$ such that $r(P( v) - Q(v))\le  R(L)$.
\end{proposition}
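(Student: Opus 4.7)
The plan is to reduce to the inverse theorem for the Gowers norms over $\mF_p$ from \cite{btz,tz-inverse}, whose hypotheses are met thanks to $d<p-1$; the inverse theorem is what furnishes uniformity in $\dim V$. Define the auxiliary function $F\colon V\oplus W \to k$ by $F(v,x) := P(v)(x)$, where $W$ is the variable space for $M^d$.

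First, I would translate the rank hypothesis into a Gowers norm lower bound for $\psi\circ F$. Applying the hypothesis twice, the iterated second difference
\[
P(v+v'_1+v'_2)-P(v+v'_1)-P(v+v'_2)+P(v) = \partial P(v+v'_1,v'_2) - \partial P(v,v'_2)
\]
is a homogeneous degree-$d$ polynomial in $x$ of rank $\le 2L$, uniformly in $v,v'_1,v'_2\in V$. By the easy direction of the partition-rank/Gowers-bias equivalence for classical polynomials (valid for $d<p$), this gives a pointwise lower bound on the Gowers $U_d$ norm of the above in $x$. A box-type Cauchy--Schwarz manipulation, taking two differences in the $V$-direction and $d$ in the $W$-direction, then promotes this to the global estimate $\|\psi\circ F\|_{U_{d+2}(V\oplus W)} \ge c(L)>0$.

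The inverse theorem for $U_{d+2}$ (applicable since $d+2\le p$) then produces a classical polynomial $R(v,x)$ on $V\oplus W$ of degree $\le d+1$ with $|\mE_{v,x}\psi(F(v,x)-R(v,x))| \ge c'(L)>0$. Decompose $R=\sum_{i+j\le d+1} R_{i,j}$ into its bihomogeneous pieces of bidegree $(i,j)$, and define the candidate $Q\colon V\to M^d$ by $Q(v)(x) := R_{1,d}(v,x)$.

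The main obstacle is passing from the averaged estimate to the required uniform pointwise bound $r(P(v)-Q(v))\le R(L)$. By Cauchy--Schwarz, $\mE_v |\mE_x\psi(F(v,x)-R(v,x))|^2 \ge c'(L)^2$, so for a positive density of $v\in V$ the fiber $F(v,\cdot)-R(v,\cdot)$ has partition rank in $x$ bounded by some $R_0(L)$ via the converse direction of the rank/bias equivalence. Reading off the degree-$d$ part in $x$ gives a rank bound on $P(v)-Q(v)$ for such $v$, after absorbing the $v$-independent summand $R_{0,d}$ (which can be shown to have bounded rank, for instance by a preliminary normalization reducing to $P(0)=0$ and exploiting the bound $r(P(0))\le L$ at $v=0$) and iterating the argument to eliminate the bihomogeneous pieces $R_{i,d}$ with $i\ge 2$, each step reducing the maximum $v$-degree of the remaining obstruction. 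Finally, the approximate additivity $(P-Q)(v) - (P-Q)(w) - (P-Q)(v-w) = \partial P(w,v-w)$ has rank $\le L$, and the standard pigeonhole $|S\cap(v-S)|>0$ then propagates the rank bound from a positive-density set $S\subset V$ to all of $V$. Uniqueness of $Q$ up to $\Hom_f(V,M^d)$ is automatic: if $Q, Q'$ are both admissible then $(Q-Q')(v)$ has rank $\le 2R(L)$ uniformly in $v$, which is the defining condition of $\Hom_f(V,M^d)$.
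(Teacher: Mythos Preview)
Your overall plan---Gowers-norm lower bound, inverse theorem, then propagation from a dense set to all of $V$---is the paper's strategy, but two of your steps have genuine gaps, and the paper's execution differs in an essential way.

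\textbf{The Gowers-norm lower bound.} Your argument establishes only that the \emph{box norm} of $\psi\circ F$ with two pure $V$-shifts and $d$ pure $W$-shifts is $\ge c(L)$. But on a product group the box norm of total order $a+b$ can strictly exceed the Gowers $U^{a+b}$ norm (e.g.\ for $f(g,h)=\psi(gh)$ on $\mF_p\times\mF_p$ one computes $\|f\|_{\square^{1,1}}=p^{-1/4}>p^{-1/2}=\|f\|_{U^2}$), so no Cauchy--Schwarz inequality runs in the direction you need. The paper instead passes to the multilinear form $\tilde P(v)(x_1,\dots,x_d)$ on $V\times W^d$ and expands the $(d+2)$-fold derivative \emph{directly}: terms with at most one $v$-derivative carry $\ge d+1$ derivatives in $\bar x$ and vanish by degree; terms with $\ge 2$ $v$-derivatives are, by the hypothesis, of rank $\ll_L 1$. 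The full derivative is thus a bounded-rank multilinear form in the joint variables $y_j=(x_j,h_1^j,\dots,h_{d+2}^j)$, and one applies the lemma ``low rank $\Rightarrow$ large exponential sum'' directly. An analogous direct expansion would also work on $V\oplus W$, but it is a computation, not a Cauchy--Schwarz step.

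\textbf{Extracting the linear map.} Taking $Q(v)=R_{1,d}(v,\cdot)$ leaves the $v$-constant piece $R_{0,d}$, and your treatment of it is incomplete: normalizing $P(0)=0$ yields $r(R_{0,d})\ll_L 1$ only if $v=0$ happens to lie in your positive-density good set, which you cannot force; and the pigeonhole propagation you describe, writing $w=v_1+v_2$ with $v_1,v_2\in S$, yields $r(P(w)-Q(w)-2R_{0,d})\ll_L 1$, not $r(P(w)-Q(w))\ll_L 1$. (Incidentally, since $i+d\le d+1$ forces $i\le 1$, there are no pieces $R_{i,d}$ with $i\ge 2$, so your iteration step is vacuous.) The paper's device avoids this entirely: after the inverse theorem it applies Cauchy--Schwarz once in each $x_i$ (using multilinearity of $\tilde P$) and then once in $v$, which replaces $Q$ by its full polarization $\tilde Q(v',x_1',\dots,x_d')=\Delta_{v'}\Delta_{x_1'}\cdots\Delta_{x_d'}Q$. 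This is automatically multilinear, in particular linear in $v'$, so no analogue of $R_{0,d}$ ever appears. The paper then uses Bogolyubov's lemma to pass from the dense good set to a subspace of bounded codimension, defines $Q'$ there, and extends linearly.
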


\begin{proof}
The proof is based on an argument of \cite{gt-equivalence}, \cite{lovett}. Our aim is to show that if $\partial P(v,v')(x)$ is of rank $\le L$ for all $v,v' \in V$ then there exists a homogeneous polynomial  $Q(v)(x)$ of degree $\le d$ such that $\partial P(v,v')=0$ and $P(v)-Q(v) \ll_{L} 1$.

\begin{lemma}
Let $P:V^d \to k$ be a multilinear homogeneous polynomial of degree $d \ge 2$ and rank $L$. Then $\mE_{\bar x \in V} \psi(P(x))\ge  C_{L,d}$ for some 
positive constant $C_{L,d}$ depending only on $L,d$. 
\end{lemma}

\begin{proof}
We prove this by induction on $d$.  For quadratics:   $P(x_1,x_2)= \sum_{i=1}^L l^1_i(x_1)l^2_i(x_2)$. If $x_1 \in \bigcap_i \Ker(l_i^1)$ then  $ \psi(P({\bar x})) \equiv 1$, thus on a subspace $W$ of codimension at most $L$ we have $ \psi(P({\bar x})) \equiv 1$, so that 
\[
\sum_{x_1 \in W} \mE_{x_2 \in V }\psi(P({\bar x}))=|W|.
\]
If $x_1$ is outside $W$ then the inner sum is nonnegative.  \\
\ \\
Suppose now that $d>2$. Let $P(x_1,\ldots, x_d)$ be multilinear homogeneous polynomial of degree $d$ and rank $L$. 
Write
\[
P(x_1, \ldots, x_d) = \sum_{j \le d} \sum_{i\le L_j} l_i^j(x_j)Q_i^j(x_1, \ldots, \hat x_j, \ldots, x_d) + \sum_{k\le M} T_k({\bar x})R_k({\bar x})
\]
with ${\bar x} = (x_1, \ldots, x_d)$, $l_i^j$ are linear  and $T_k({\bar x})R_k({\bar x})$ is homogenous multilinear in $x_1, \ldots x_d$ such that the degrees of $T_k({\bar x})$ and $R_k({\bar x})$ are  $\ge 2$, and $\sum_j L_j+M=L$. \\
 \\
If for all $j$ we have  $L_j=0$, then $P({\bar x})= \sum_{k\le L} T_k({\bar x})R_k({\bar x})$ with  the degree of $T_k({\bar x}),R_k({\bar x}) \ge 2$.   For $x_1 \in V$ write $P_{x_1}(x_2, \ldots, x_d) =  P(x_1,x_2, \ldots, x_d)$.  Then $P_{x_1}$ is of rank $L$ and degree $d-1$ for all $x_1$ and we obtain the claim by induction. \\
\ \\
Otherwise there is a $j$, such that $L_j>0$, without loss of generality $j=1$.
Let $W = \bigcap_{i} \Ker(l_i^1)$, then $W$ is of codimension at most $L_1$. For   $x_1 \in W$ let  $P_{x_1}(x_2, \ldots, x_d) =  P(x_1,x_2, \ldots, x_d)$.  Consider the sum 
\[
 \mE_{x_2, \ldots, x_d \in V}  \psi(P_{x_1}(x_2, \ldots, x_d)).
\]
For $x_1 \in W$ we have $P_{x_1}$ is of rank $\le L-L_1$ and homogeneous of degree $d-1$. 
By the induction hypothesis the above sum  is $\ge C_{L, d-1}$, so that 
\[
\sum_{x_1 \in W} \mE_{x_2, \ldots, x_d \in V}  \psi(P_{x_1}(x_2, \ldots, x_d)) \ge C_{L, d-1} |W|. 
\]
For any $x_1 \notin W$,  $P_{x_1}$ is of degree $d-1$, and of rank $<\infty$ thus by the induction hypothesis,
\[
 \mE_{x_2, \ldots, x_d \in V}  \psi(P_{x_1}(x_2, \ldots, x_d)) \ge 0.
\]
Thus 
\[
\mE_{x_1 \in V} \mE_{x_2, \ldots, x_d \in V}  \psi(P_{x_1}(x_2, \ldots, x_d)) \ge C_{L, d-1} |W|/|V|,
\]
and we obtain the claim. 
\end{proof}
\ \\
Let  $P:V\to M_d$ be a map such that 
 for all $u,v$:
\[
rk(P(v)+P(u) - P(v+u))<L.
\] 
We define a function on $V\times W^d$ by
 $$f(v,x_1, \ldots, x_d) = \psi(\tilde P(v)(x_1, \ldots, x_d)) =  \psi(\tilde  P(v)({\bar x})).$$

\begin{lemma} $\|f\|_{U^{d+2}} \ge c_{L}$. 
\end{lemma}

\begin{proof}
We expand
\[
\Delta_{(v_{d+2},{\bar h_{d+2}})} \ldots  \Delta_{(v_1,{\bar h_1})}\tilde P(v)({\bar x}) = \sum_{k=0}^{d+2} \Delta_{{\bar h_{d+2}}} \ldots  \Delta_{{\bar h_{k+1}}}( \Delta_{v_k} \ldots  \Delta_{v_1}(\tilde P(v)))({\bar x}+{\bar h_1 + \ldots + h_k})
\]
with $v_i \in V$ and ${\bar h_i} \in V^d$.  Since $P_v$ is of degree $d$ the above is equal
\[
\sum_{k=2}^{d+2} \Delta_{{\bar h_{d+2}}} \ldots  \Delta_{{\bar h_{k+1}}}( \Delta_{v_k} \ldots  \Delta_{v_1} \tilde P(v))({\bar x}+{\bar h_1 + \ldots +\bar  h_k})
\]
Since $rk(P(v)+P(u) - P(v+u))<L$, for any $v_1+u_1=v_2+u_2$  we have
\[
rk(P(v_1)+P(u_1) - P(v_2)-P(u_2) )< 2L.
\]
that for $k\ge 2$ we have  $\Delta_{v_k} \ldots  \Delta_{v_1}\tilde P(v)$ is of rank $\ll_L 1$. For fixed $v, v_1, \ldots, v_k$, 
the above polynomial can be expresses as 
 a multilinear  homogeneous polynomial of degree $d$  in  $y_1, \ldots, y_d$ with 
\[
y_j= (x_j,h_1^j,  \ldots, h_{d+2}^j).
\]
which is of  rank that is bounded in terms of $L, d$. Now apply previous lemma.
\end{proof}

By the inverse theorem for the Gowers norm \cite{btz,tz-inverse} there is a polynomial $Q$ on $V^{d+1}$ of degree $d+1$ on with  $Q:V \times V^d \to k$ s.t.
such that 
\[
|\mE_{v,x_1, \ldots, x_d }\psi(\tilde P(v)(x_1, \ldots, x_d)-Q(v, x_1, \ldots,x_d)) | \gg_L 1. 
\]
By an application of the triangle and Cauchy-Schwarz inequalities we obtain
\[\begin{aligned}
&|\mE_{v,x_1, \ldots, x_d }\psi(\tilde P(v)(x_1, \ldots, x_d)-Q(v, x_1, \ldots,x_d)) |^2 \\
& \le \left|\mE_{v,x_1, \ldots,x_{d-1}}\left| \mE_{x_d }\psi(\tilde P(v)(x_1, \ldots, x_d)-Q(v, x_1, \ldots,x_d))\right| \right|^2\\
&\le \mE_{v,x_1, \ldots,x_{d-1}}\left| \mE_{x_d }\psi(\tilde P(v)(x_1, \ldots, x_d)-Q(v, x_1, \ldots,x_d)) \right|^2\\
&= \mE_{v,x_1, \ldots,x_{d-1}} \mE_{x_d,x'_d }\psi(\tilde P(v)(x_1, \ldots, x_d+x'_d) -\tilde P(v)(x_1, \ldots, x_d)-Q(v, x_1, \ldots,x_d+x_d')+Q(v, x_1, \ldots,x_d)) \\
&= \mE_{v,x_1, \ldots,x_{d-1}} \mE_{x_d,x'_d }\psi(\tilde P(v)(x_1, \ldots, x_{d-1},x'_d)-\Delta_{x_d'}Q(v, x_1, \ldots,x_d)).
\end{aligned}\]
Where the last equality  follows  from the fact that $\tilde P$ is homogeneous multilinear form 
and thus 
\[
\tilde P(v)(x_1, \ldots, x_d+x'_d) -\tilde P(v)(x_1, \ldots, x_d)= \tilde P(v)(x_1, \ldots, x_{d-1},x'_d).
\]
Applying Cauchy-Schwarz  $d-1$ more  times we obtain
\[
 \mE_{v,x_1, x_1', \ldots, x_d,x'_d }\psi(\tilde P_v(x'_1, \ldots, x'_d)-\Delta_{x_1'}\ldots \Delta_{x_d'}Q(v, x_1, \ldots,x_d))  \gg_{L} 1
\]
One more application of Cauchy-Schwarz  gives
\[
 \mE_{v,v',x_1, x_1', \ldots x_d,x'_d }\psi((\tilde P(v+v')-\tilde P(v))(x'_1, \ldots, x'_d)-\Delta_{v'}\Delta_{x_1'}\ldots \Delta_{x_d'}Q(v, x_1, \ldots,x_d))  \gg_{L} 1.
\]
 Since  $Q$ is a polynomial of degree $d+1$,  $\Delta_{v'}\Delta_{x_1'}\ldots \Delta_{x_d'}Q$ is independent of $v,x_1, \ldots, x_d$
so we obtain
\[
|\mE_{v,v',x'_1, \ldots, x'_d }\psi((\tilde P(v+v')-\tilde P(v))(x'_1, \ldots, x'_d)-\tilde Q(v', x'_1, \ldots,x'_d)) | \gg_{L} 1
\]
with $\tilde Q$ a multilinear homogeneous form in $v', x'_1, \ldots,x'_d$. Denote
 by 
 $\tilde Q(v)$ the function on $W^d$ given by $\tilde Q(v, \bar x)=Q(v,\bar x)$. Then
\[
\mE_{v,v'}|\mE_{x'_1, \ldots, x'_d }\psi((\tilde P(v+v')-\tilde P(v)- \tilde Q(v'))(x'_1, \ldots, x'_d) | \gg_{L} 1.
\]
By \cite{gt-polynomial} (Proposition 6.1)  and \cite{BL} (Lemma 4.17) it follows that for at least $\gg_{L} |V|^2$ values of $v,v'$ we have $ P(v+v')-P(v)-  Q'(v')$ is of rank $\ll_{L} 1$, where  $Q'(v)(x) =  \tilde Q(v)(x, \ldots, x)/d!$  where $Q'(v)$ is linear in $v$. 
Recall now that $P(v+v')-P(v) - P(v')$ is of rank $\le L$, so that we get  a set $E$ of size  $\gg_{L} |V|$  of $v$ for which 
\[
P(v) =  Q'(v) + R(v)
\]
with $R(v)$ of rank $\le L$. Since $E \gg_{L} |V|$, by the Bogolyubov lemma (see e.g. \cite{wolf}))  $2E$ contains a subspace $E'$  of codimension $K \ll_{L} 1$ in $V$.  For $v \in E'$, define
\[
P'(v)=Q'(v).
\]
Let $P':V\to M_d$ be any extension of $P'$ linear in $v\in V$. Then $P'(v) - P(v)$ is of rank  $\ll_{L} 1$, and $P'(v)$ is a cocycle. 
\end{proof}
\begin{remark}\label{nonclassical}  In the case where $p\le d+2$, by the inverse theorem for the Gowers norms over finite fields the polynomial  $Q$ in the above argument on $V \times V^d$ would be replaced by a {\em nonclassical polynomial} see \cite{tz-low}, and the same argument would give that the approximate cohomology obstructions lie in the nonclassical degree $d$ polynomials - these are functions $P: V \to \mT$ satisfying $\Delta_{h_{d+1}} \ldots \Delta_{h_1}P \equiv 0$. 
\end{remark}

{\em Proof of  Theorem \ref{Main}}. Let $k=\mF _q$, and let $V$ be an countable vector space over $k$. Denote  $V_n=k^n$, then  $V=\cup V_n$.  Let $M\subset k[x_1,...x_n,...]$ be the subspace of homogeneous polynomials of degree $d$. For any $l\geq 1$ we denote by $N_l\subset M$ the subset of polynomials of in $x_1,...,x_l$ and denote by $p_l:M\to N_l$ the projection defined by $x_i\to 0$ for $ i> l$. Observe that  $p_n$ does not increase the rank. By Proposition \ref{main-finite}  there is a constant $C$ depending only on $L,d$ (and $k$) such that for any $n$ there exists a linear map $\phi _n:V_n\to M$ such that  rank $(P(v)-\phi _n (v))\le C$, for $v\in V_n$. 

We now show that the existence of such linear maps $\phi _n$ implies the existence of a linear map  $\psi :V\to M$ such that 
$rank (R(v)-\psi  (v))\leq C$, for $v\in V$.

\begin{lemma}\label{compatible} Let $X_n$, $ n\geq 0$ be finite not empty sets and  
 $f_n:X_{n+1}\to X_n$ be maps. Then one can find $x_n\in X_n$ such that $f_n(x_{n+1})=x_n$.
\end{lemma}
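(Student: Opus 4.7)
The plan is to pass to a sub-system of \emph{stable} subsets $Y_n \subseteq X_n$ on which the transition maps become genuinely surjective, and then extract the desired compatible sequence by a trivial downward choice.

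For $n \leq m$ set $g_{n,m} := f_n \circ f_{n+1} \circ \cdots \circ f_{m-1} : X_m \to X_n$, with the convention $g_{n,n} = \mathrm{id}$, and let $Y_n^{(m)} := g_{n,m}(X_m) \subseteq X_n$. First I would observe that for fixed $n$, the family $\{Y_n^{(m)}\}_{m \geq n}$ is a non-increasing chain of nonempty subsets of the \emph{finite} set $X_n$, so it stabilizes: there is $M_n$ with $Y_n^{(m)} = Y_n^{(M_n)} =: Y_n$ for all $m \geq M_n$, and this $Y_n$ is nonempty.

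The key step is to show that $f_n$ restricts to a surjection $Y_{n+1} \twoheadrightarrow Y_n$. The inclusion $f_n(Y_{n+1}) \subseteq Y_n$ is immediate from the definitions. For the reverse inclusion, given $z \in Y_n$ and any $m \geq n+1$, one picks $w_m \in X_m$ with $g_{n,m}(w_m) = z$ and sets $y_m := g_{n+1,m}(w_m) \in Y_{n+1}^{(m)}$, which automatically satisfies $f_n(y_m) = z$. Choosing $m \geq M_{n+1}$ places $y_m$ in $Y_{n+1}$, showing $z \in f_n(Y_{n+1})$.

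With surjectivity in hand the conclusion is immediate: pick any $x_0 \in Y_0$, and inductively choose $x_{n+1} \in Y_{n+1}$ with $f_n(x_{n+1}) = x_n$, which is possible since $x_n \in Y_n = f_n(Y_{n+1})$. The only real content of the argument is the stabilization, and the minor subtlety to watch is that the stable image at level $n+1$ must map onto the stable image at level $n$; this is exactly where finiteness of the $X_n$ is used in an essential way.
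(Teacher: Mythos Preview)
Your proof is correct and follows essentially the same approach as the paper's: define the nested images $Y_n^{(m)}=g_{n,m}(X_m)$, use finiteness to get stabilization to a nonempty $Y_n$, and observe that the restricted maps $f_n|_{Y_{n+1}}:Y_{n+1}\to Y_n$ are surjective, whence the compatible sequence. You in fact supply more detail than the paper does on the surjectivity step, which the paper simply asserts.
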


\begin{proof} This result is standard, but for the convenience of a reader we provide a proof.
The claim is obviously true if the maps $f_n$ are surjective.
For any $m>n$ we define the subset  $X_{m,n}\subset  X_n$ as the image of 
$$f_n\circ \ldots \circ f_{m-1}:X_m\to X_n$$ 
It is clear that for a fixed $n$ we have 
$$X_n\supset X_{n+1,n}\supset  \ldots \supset X_{m,n}\supset \ldots $$  
\ \\
We define $Y_n$ as the intersection $\cap _{m>n}X_{m,n}$. Since the set $X_n$ is finite, the sets $X_{m,n}$ stabilize as m grows and hence 
$Y_n$  is not empty. Let $\ti f_n$ be the restriction of $f_n$ on 
$Y_{n+1}$. Now the maps  $\ti f_n:Y_{n+1}\to Y_n$  are surjective, thus the lemma follows. 
\end{proof}

\begin{lemma}\label{lift}
Let $P:V\to M$ be a map such that for any $n$ there exists a linear map $\phi _n:V_n\to M$ such that  rank $(P(v)-\phi _n (v))\leq C$, $v\in V_n$. Then there exists  a linear map $\psi :V\to M$ such that  $rank (P(v)-\psi  (v))\leq C$, for all $v\in V$.
\end{lemma}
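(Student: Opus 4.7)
The natural plan is a compactness/inverse-limit argument based on Lemma~\ref{compatible}.

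\textbf{Reduction.} For each $n$ the union $\bigcup_{v\in V_n}\mathrm{supp}(P(v))$ is a finite subset of the variable index set (since $V_n$ is finite over the finite field $k$ and each $P(v)\in M$ has finite support), and these unions are nested. After reindexing the variables of $M$ we may assume $P(V_n)\subset N_{N(n)}$ for some nondecreasing sequence $N(n)$. Since $p_{N(n)}$ fixes $P(v)$ for $v\in V_n$ and does not increase rank, replacing $\phi_n$ by $p_{N(n)}\circ\phi_n$ preserves the rank hypothesis, and we may assume $\phi_n:V_n\to N_{N(n)}$.

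\textbf{Inverse system.} Let $F_n$ be the set of linear maps $\phi:V_n\to N_{N(n)}$ satisfying $r(P(v)-\phi(v))\le C$ for all $v\in V_n$. Then $F_n$ is nonempty by the previous step and finite (both $V_n$ and $N_{N(n)}$ are finite-dimensional over the finite field $k$). The map $f_n:F_{n+1}\to F_n$, $\phi\mapsto p_{N(n)}\circ\phi|_{V_n}$, is well-defined because for $v\in V_n$, $p_{N(n)}(P(v)-\phi(v))=P(v)-p_{N(n)}(\phi(v))$ still has rank $\le C$. Applying Lemma~\ref{compatible} produces a compatible sequence $\{\phi_n\in F_n\}$ with $p_{N(n)}(\phi_{n+1}|_{V_n})=\phi_n$.

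\textbf{Gluing.} Define $\psi:V\to M$ by $\psi(e_i):=\phi_i(e_i)\in N_{N(i)}\subset M$, extended $k$-linearly. For $v=\sum_{i\le n}a_ie_i\in V_n$,
$$P(v)-\psi(v)=\bigl(P(v)-\phi_n(v)\bigr)+\sum_{i\le n}a_i\bigl(\phi_n(e_i)-\phi_i(e_i)\bigr).$$
The first summand has rank $\le C$, and by compatibility each $\phi_n(e_i)-\phi_i(e_i)$ lies in $\ker p_{N(i)}$ and has rank $\le 2C$.

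\textbf{Main obstacle.} Bounding the second summand uniformly in $n$: the trivial triangle-inequality estimate $2nC$ blows up. The right fix is to refine the $F_n$ (for instance by demanding $\phi(e_i)\in N_{L(i)}$ for a suitably chosen increasing function $L$) so that the resulting compatible sequence is \emph{stable}, meaning $\phi_n(e_i)=\phi_i(e_i)$ for all $n\ge i$. One must check the refined sets are still nonempty with well-defined restriction maps; Lemma~\ref{compatible} then yields a stable compatible sequence, whence $\psi|_{V_n}=\phi_n$ as maps into $M$ for $n$ sufficiently large, and $r(P(v)-\psi(v))\le C$ follows directly from $\phi_n\in F_n$.
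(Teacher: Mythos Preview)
Your approach---reduce to finite targets $N_{N(n)}$, apply Lemma~\ref{compatible}, then glue---is exactly the paper's. You are also right that there is a genuine gluing issue: compatibility under \emph{restriction followed by projection} only gives $p_{N(n)}(\phi_{n+1}|_{V_n})=\phi_n$, so the values $\phi_n(e_i)$ need not stabilize as $n\to\infty$, and the na\"ive limit lands in the completion $\varprojlim N_{N(n)}$ rather than in $M=\bigcup N_{N(n)}$. (Nothing prevents $\phi_n(e_1)-\phi_1(e_1)$ from being, say, $x_1\cdot\sum_{j\le N(n)}x_j$: this has rank~$1$ for every $n$ and is killed by $p_{N(1)}$, yet does not converge in $M$.)

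However, your proof is explicitly incomplete. You propose refining $F_n$ by the extra constraint $\phi(e_i)\in N_{L(i)}$ so that plain restriction becomes the transition map and stability is forced, but you do not verify that the refined sets are nonempty---and this is the entire content of the lemma. The obvious candidate $\psi(e_i):=p_{L(i)}(\phi_n(e_i))$ satisfies the support condition but destroys the uniform rank bound on $P(v)-\psi(v)$ by exactly the error sum $\sum_{i\le n}a_i(\phi_n(e_i)-p_{L(i)}\phi_n(e_i))$ you already wrote down. So the ``main obstacle'' you name is real, and you have not removed it; what is missing is a mechanism that produces, for each $n$, an approximant whose value at $e_i$ is already confined to $N_{L(i)}$ \emph{while} retaining the rank bound on all of $V_n$.

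For what it is worth, the paper's own proof is terse at precisely this point: it takes $f_n$ to be ``the restriction from $V_{n+1}$ onto $V_n$'' and asserts $\psi_{n+1}|_{V_n}=\psi_n$, which, as you observed, is not automatic since $\psi_{n+1}$ has target $N_{l(n+1)}$ rather than $N_{l(n)}$. So your write-up is more scrupulous than the paper's, but neither version closes the gap as written.
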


\begin{proof} 
 Let $l(n)$ be 
such that $P(V_n)\subset N_{l(n)}$  and $\psi _n=p_{l(n)}\circ \phi _n$. Since $p_n$ does not increase the rank, and since
$(p_{l(n)}\circ P)(V_n)= R(V_n)$ we have 
\[
(\star _n) \qquad rank(P(v)-\psi _n (v))\leq C,  \quad v\in V_n.
\]
We apply Lemma \ref{compatible} to the case when $X_n$ is the set of linear maps 
$\psi _n:V_n\to N_{l(n)}$ satisfying $(\star _n)$ and  
$f_n$ are the restriction from $V_{n+1}$ onto $V_n$
we find  the existence of  linear maps $\psi _n:V_n\to N_{l(n)}$ 
satisfying the condition $(\star _n)$ and such that the restriction of $\psi _{n+1}$ onto $V_n$ is equal to $\psi _n.$
The system $\{ \psi _n\}$ defines a linear map $\psi :V\to M$. 

\end{proof}

We now prove the result stated in the abstract by proving the equality  $R(V,r,k)=R^f(V,r,k)$. 
Let $\Gamma$ be an abelian group, $\Gamma =\cup \Gamma _n$ where $\Gamma_n$ are finitely generated groups. Let
$k$ be a finite field, and let $V,W$ be $k$-vector spaces with bases $v_j, w_j$. For $n\geq 1$ let $V_n, W_n$ be the spans of $v_j,w_j,1\leq j\leq n$. We denote by $i_n:V_n\to V_{n+1}$ the natural imbedding and by $\beta _n:W_{n+1}\to W_n$ the natural projection. Denote
$\Hom^f(V,W)$ the finite rank homomorphisms from $V \to W$. 

\begin{proposition}
Suppose  there exists $C=C(c)$ such that for  any map 
$a^f:\Gamma \to \Hom^f(V,W)$  such that  
$$r(a^f(\Gg '+\Gg '')-a^f(\Gg ')-a^f(\Gg ''))\leq c$$
there exists a homomorphism $\chi^f:\GG \to \Hom^f(V,W)$ such that $r(a^f(\Gg )-\chi ^f(\Gg ))\leq C$.
Then  for any map  $a:\GG \to \Hom(V,W)$ 
such that  
$$r(a(\Gg '+\Gg '')-a(\Gg ')-a(\Gg ''))\leq c$$
there exists a homomorphism $\chi :\Gamma \to \Hom(V,W)$ such that $r(a(\Gg )-\chi (\Gg ))\leq C$.
\end{proposition}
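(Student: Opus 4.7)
The plan is to reduce the statement to the finite-rank hypothesis via a splitting of a short exact sequence of $k$-vector spaces. I would work with
\[
0 \to \Hom^f(V,W) \hookrightarrow \Hom(V,W) \xrightarrow{p} Q \to 0,
\]
where $Q := \Hom(V,W)/\Hom^f(V,W)$. Since rank is subadditive and homogeneous under scalars, $\Hom^f(V,W)$ is a $k$-subspace of $\Hom(V,W)$, so this is an exact sequence of $k$-vector spaces and therefore splits; I fix any $k$-linear section $s: Q \to \Hom(V,W)$ with $p \circ s = \mathrm{id}_Q$.

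Set $\bar a := p \circ a : \Gamma \to Q$. The approximate cocycle condition says every $a(\Gg'+\Gg'')-a(\Gg')-a(\Gg'')$ has rank $\le c$, hence lies in $\Hom^f(V,W) = \ker p$, so $\bar a$ is an honest group homomorphism. Define $a^f := a - s \circ \bar a$. By construction $p \circ a^f = 0$, so $a^f$ takes values in $\Hom^f(V,W)$; and since $s \circ \bar a$ is a homomorphism, the coboundary of $a^f$ coincides with that of $a$, so $a^f : \Gamma \to \Hom^f(V,W)$ is itself a $c$-approximate cocycle.

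Now apply the hypothesis of the proposition to $a^f$, obtaining a homomorphism $\chi^f : \Gamma \to \Hom^f(V,W)$ with $r(a^f(\gamma) - \chi^f(\gamma)) \le C$ for all $\gamma \in \Gamma$. Put
\[
\chi := s \circ \bar a + \chi^f : \Gamma \to \Hom(V,W).
\]
This is a sum of two homomorphisms, hence itself a homomorphism, and $a(\gamma) - \chi(\gamma) = a^f(\gamma) - \chi^f(\gamma)$, giving $r(a(\gamma)-\chi(\gamma)) \le C$ with the same constant.

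No essential obstacle arises: the argument is an abstract-nonsense reduction that preserves the constant $C$ unchanged. The only non-constructive step is the existence of the $k$-linear section $s$, which is guaranteed because every short exact sequence of vector spaces over a field splits.
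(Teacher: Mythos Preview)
Your argument is correct and notably cleaner than the paper's. The paper proves this proposition by a compactness argument: it truncates $V,W$ to the finite-dimensional pieces $V_n,W_n$ (where every linear map is automatically of finite rank), restricts $\Gamma$ to the finitely generated $\Gamma_n$, applies the finite-rank hypothesis to the truncated map $\beta_n\circ a(\cdot)\circ i_n$, and then invokes K\"onig's lemma (their Lemma~\ref{compatible}) to thread the resulting finite sets of approximants $\chi_n$ into a globally defined $\chi$.

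Your route avoids all of this. By choosing a $k$-linear section $s$ of $\Hom(V,W)\twoheadrightarrow Q$ you subtract off an honest homomorphism $s\circ\bar a$ and reduce directly to a single application of the hypothesis for a map into $\Hom^f(V,W)$; no truncation, no inverse-limit step, and the constant $C$ is preserved verbatim. Your proof also uses strictly less of the ambient setup: you never need that $k$ is finite, nor that $\Gamma$ is an increasing union of finitely generated subgroups, both of which the paper's K\"onig argument relies on to ensure the sets $X_n$ are finite. The only nonconstructive ingredient you use is the existence of the section $s$, which follows from the axiom of choice in the usual way for vector spaces over a field.
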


\begin{proof}
We will use the following fact that is an immediate consequence of K\"onig's lemma : Let $X$ be a locally finite tree, $x\in X$. If for any $N$ there exists a branch starting at $x$ of length $N$ then there exists an infinite branch starting at $x$. \\
\ \\
Let $a:\Gamma \to \Hom(V,W)$ be a map such that  
$$r(a(\Gg '+\Gg '')-a(\Gg ')-a(\Gg ''))\leq c$$
We define $F_n=\Hom (V_n,W_n)$. Let $Y_n=\Hom (\Gamma _n, F_n)$ and 
$q_n :Y_{n+1}\to Y_n$ be given by
$$q_n (\chi _{n+1})=\beta _n\circ \chi '_{n+1}\circ i_n$$
where $\chi '_{n+1}$ is the restriction of $\chi _{n+1}$ on $\GG _n$. \\
\ \\
We denote by $X_n\subset Y_n$ the subset of homomorphisms $\chi _n$ of $\Gamma _n$ such that 
$$r(\beta _n\circ a(\Gg )\circ i_n -\beta _n\circ \chi _n(\Gg )\circ i_n )\leq C.$$
Let $X$ be the disjoint union of $X_n$ and we connect 
$\chi _n\in X_n$ with $\chi _{n+1}\in X_{n+1}$ if 
$\chi _n=q_n(\chi _{n+1})$. \\
\ \\
By the assumption $X_n$ are finite not empty sets and for any $n$ there exists a branch from $X_0$ to $X_n$ (any $\chi _n\in X_n$ defines such a branch). Now the Lemma  \ref{compatible}  implies the 
existence a character $\chi :\Gamma \to \Hom(V,W)$ such that $r(a(\Gg )-\chi (\Gg ))\leq C$.
\end{proof}

To conclude the proof of Theorem \ref{Main} we calculate the kennel of the map $a^1$:

\begin{proposition}\label{kernel} The kernel of $a^1$ consists of maps $P$ of finite rank.
\end{proposition}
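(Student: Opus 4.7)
The plan is to first identify $\ker(a^1)$ concretely, and then deduce the nontrivial direction from Proposition~\ref{main-finite} together with the compactness Lemmas~\ref{compatible} and~\ref{lift}.

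Since $V$ acts trivially on $M$, the zeroth coboundary $\partial : C^0(V,M) = M \to C^1(V,M)$ vanishes, so $B^1_\mcF(V,M)$ consists of the $1$-cochains $r : V \to M$ whose image is contained in some $M_i$, that is, of uniformly bounded rank. Identifying $H^1(V,M) = \Hom(V,M)$, we obtain
\[
\ker(a^1) \;=\; \{\, P \in \Hom(V, M^d) : \sup_{v \in V} r(P(v)) < \infty \,\}.
\]
One inclusion is immediate from the definition of $\Hom _f(V,M^d)$: a finite rank homomorphism $P = \sum_{k,j} Q_{k,j}(\cdot)\, R_{k,j}$ with finitely many terms satisfies $r(P(v)) \le $ (total number of terms) for every $v$, so $\Hom _f(V, M^d) \subseteq \ker(a^1)$.

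For the reverse inclusion, let $P : V \to M^d$ be linear with $r(P(v)) \le R$ for all $v \in V$. I would induct on $R$; the case $R = 0$ forces $P \equiv 0$, which is trivially in $\Hom _f$. For the inductive step, write $V = \bigcup_n V_n$ with $V_n$ finite-dimensional; via the compactness argument of Lemmas~\ref{compatible} and~\ref{lift} it suffices to produce, for each $n$, a finite rank decomposition of $P|_{V_n}$ in the sense of the definition of $\Hom _f$, with number of terms bounded uniformly in $n$. For finite-dimensional $V_n$, since $P$ is linear the hypothesis of Proposition~\ref{main-finite} holds with $L = 0$, and one re-runs its proof to obtain a linear map $Q' : V_n \to M^d$ with $r(P(v) - Q'(v)) \le R'$ for some $R' < R$. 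Crucially, $Q'$ is built from the multilinear form $\tilde Q$ supplied by the Gowers-norm inverse theorem, and expanding $\tilde Q$ in its multilinear decomposition writes $Q'$ as a finite rank map with at most $C(R,d)$ terms of the required form $Q_{k,j}(v)\cdot R_{k,j}$ with $R_{k,j}$ a fixed polynomial and $Q_{k,j}$ linear in $v$. The residue $P - Q'$ is then linear with strictly smaller pointwise rank, and the inductive hypothesis supplies a finite rank decomposition of $P - Q'$; summing recovers a decomposition of $P$.

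The main obstacle is the quantitative control required at two points. First, one must verify, by tracking constants through the Gowers inverse theorem and the Bogolyubov step in the proof of Proposition~\ref{main-finite}, that the number of terms in the finite rank decomposition of $Q'$ is bounded independently of $\dim V_n$, so that the compactness mechanism of Lemmas~\ref{compatible} and~\ref{lift} has finite sets to work with. Second, one must ensure that the rank of the residue $P-Q'$ genuinely drops below $R$, possibly by iterating the correction a bounded number of times; here the linearity of both $P$ and $Q'$ is essential, because the exceptional set on which the pointwise rank might fail to drop is then a linear subspace that can be handled by a direct subtraction using a further bounded-size finite rank correction. Once these uniform bounds are secured, patching the decompositions across the finite-dimensional $V_n$ concludes the proof.
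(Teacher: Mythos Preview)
Your identification of $\ker(a^1)$ and the easy inclusion are correct, but the inductive scheme for the reverse inclusion has a genuine gap. The problem is the sentence ``expanding $\tilde Q$ in its multilinear decomposition writes $Q'$ as a finite rank map with at most $C(R,d)$ terms''. The inverse theorem only produces some polynomial $Q$ of degree $d+1$ on $V_n\times W^d$; its multilinearization $\tilde Q(v,x_1,\dots,x_d)$ is multilinear, but multilinearity alone gives no bound on the number of terms in a decomposition of the shape $\sum_j l_j(v)R_j(\bar x)$. In general such a decomposition needs $\dim V_n$ terms, which destroys the uniformity you need for the compactness step. Relatedly, applying Proposition~\ref{main-finite} with $L=0$ is vacuous (take $Q=P$), and nothing in its proof forces the corrector $Q'$ to lie in $\Hom_f$ with a bounded number of summands; nor is there any mechanism by which the residual rank $r(P-Q')$ would strictly decrease, so the induction does not get started.

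The paper's argument avoids the inverse theorem entirely here and instead uses the bias--rank equivalence in the opposite direction. One views $\tilde P(v)(\bar x)$ as a single multilinear polynomial of degree $d+1$ on the product space $V\times W^d$. The hypothesis $r(P(v))\le L$ for every $v$ gives $\mE_{\bar x}\psi(\tilde P(v)(\bar x))\ge C(L)$ for each fixed $v$, hence also after averaging in $v$; large bias then forces $\tilde P$ itself to have rank $\ll_L 1$ \emph{as a polynomial on $V\times W^d$}. This is exactly the bounded-size decomposition $\tilde P=\sum_{j=1}^K \tilde Q_j\tilde R_j$ you were hoping $\tilde Q$ would have. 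Finally, linearity of $\tilde P$ in $v$ forces, in each summand, one factor to be independent of $v$; separating off the terms where the $v$-dependent factor is purely linear in $v$ (and passing to the intersection of their kernels) yields the finite-rank description with bounds independent of the dimensions, after which the compactness lemmas finish the job.
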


\begin{proof}
Suppose  $V, W$ are of dimension $n_1, n_2$ respectively. All the bounds below are independent of $n_1, n_2$. Suppose  $P:V \to M_d$ is a linear map with $r(P) \le L$. 
Let $\tilde P$ be the multilinear version of $P$  as in \eqref{multilinear}.
Let $f(v,\bar x) = \psi(\tilde P(v)(\bar x))$.
Now $\tilde P(v)(\bar x)$ is a multilinear polynomial on  $V\times W^d$ of degree $d+1$. \\
\ \\
For any fixed $v$ we have
\[ 
\mathbb E_{\bar x \in W^d} \psi(\tilde P(v)(\bar x)) \ge C(L),
\]
and thus
\[
\mathbb E_{v\in V}  \mathbb E_{\bar x \in W^d} \psi(\tilde P(v)(\bar x)) \ge C(L).
\]
It follows that  $\tilde P(v)(\bar x)$ is of bounded rank $\ll_L 1$ and thus of the form
\[
\tilde P(v)(\bar x) = \sum_{j=1}^K \tilde Q_j(v,\bar x) \tilde R_j(v, \bar x)
\]
with $\tilde Q_j, \tilde R_j$ of degree $\ge 1$ ,for any fixed $v$ also $\tilde Q_j, \tilde R_j$ are of degree $\ge 1$, and $K \ll_L 1$. For any fixed $x$, $\tilde P(V)$ is linear
and thus either $\tilde Q_j$ or $\tilde R_j$ are constant as a function of $v$. Recall that $P(v)(x) = \frac{1}{d!}\tilde P(v)(x,\ldots, x)$, and let 
$Q_j(v,x) =  \frac{1}{d!}\tilde Q_j(v,x,\ldots, x)$, similarly $R_j$. 
Let $J$ be the set of $j$ in the sum $P=\sum_j Q_jR_j$ such that $Q_j$ is linear in $v$ and does not depend on $x$. Let $V'=\bigcap _j \Ker Q_j$. The restriction of $P$ to $V'$ has finite (that is by a constant which does not depend on $n_1,n_2$ ) rank. Since 
$codim (V')\leq |J|$ we see that $P$ has rank that is bounded by a constant which does not depend on $n_1$ and $n_2$.\\
\ \\
Now let $V, W$ be infinite.  Let $V_n, l(n), p_{l(n)}$ be as in Lemma \ref{lift}.  Len $ P_n(V_n)= (p_{l(n)}\circ P)(V_n)$.  Now apply Lemma
\ref{compatible} for $X_n$ the collection of finite rank maps from $V_n \to N_{l(n)}$, and $f_n$ the restriction as before. This finishes a proof of Theorem \ref{Main}.
\end{proof}


\begin{thebibliography}{99}
\bibitem[BL]{BL} Bhowmick A.,Lovett S. {\em Bias vs structure of polynomials in large fields, and applications in effective algebraic geometry and coding theory}. 
\bibitem{btz} Bergelson, V., Tao, T., Ziegler, T. {\em An inverse theorem for the uniformity seminorms associated with the action of $\mF_p^{\infty}$.} Geom. Funct. Anal. 19 (2010), no. 6, 1539-1596. 
\bibitem{gowers}  Gowers, T. {\em A new proof of Szemer\'edi's theorem}. Geom. Funct. Anal. 11,  (2001) 465Ð588. 
\bibitem{gt-polynomial}  Green, B.,  Tao, T. {\em The distribution of polynomials over finite fields, with applications to the Gowers norms.}
Contrib. Discrete Math. 4 (2009), no. 2, 1-36.
\bibitem{gt-equivalence} Green, B., Tao, T. {\em An equivalence between inverse sumset theorems and inverse conjectures for the $U_3$ norm.} Math. Proc. Cambridge Philos. Soc. 149 (2010), no. 1, 1-19. 
\bibitem{ep} Kazhdan, D. On $\ep$-representations. Israel J. Math. 43 (1982), no. 4, 315-323. 
\bibitem{lovett} Lovett, S. {\em
Equivalence of polynomial conjectures in additive combinatorics. }
Combinatorica 32 (2012), no. 5, 607 -618.
\bibitem{sam} Samorodnitsky, A. {\em Low-degree tests at large distances}, STOC Ã07.
\bibitem{tao-example} Tao, T.  {\em https://terrytao.wordpress.com/2008/11/09/a-counterexample-to-a-strong-polynomial-freiman-ruzsa-conjecture/}
\bibitem{schmidt} Schmidt, W.M. {\em Bounds for exponential sums} Acta Arith. 44 (1984), 281-297.
\bibitem{tz-inverse} Tao, T., Ziegler, T. {\em The inverse conjecture for the Gowers norm over finite fields via the correspondence principle. }
Anal. PDE 3 (2010), no. 1, 1-20. 
\bibitem{tz-low} Tao, T., Ziegler, T. {\em The inverse conjecture for the Gowers norm over finite fields in low characteristic.} Ann. Comb. 16 (2012), no. 1, 121-188. 
\bibitem{wolf} Wolf, J. {\em Finite field models in arithmetic combinatorics ten years on}.
Finite Fields Appl. 32 (2015), 233-274. 
\end{thebibliography}
\end{document}